\colorlet{mdtRed}{red!50!black}
\definecolor{dblue}{rgb}{0,0,.6}
\DeclareMathOperator{\GL}{\textnormal{GL}}
\DeclareMathOperator{\Rep}{\textnormal{Rep}}
\DeclareMathOperator{\et}{\textnormal{\'et}}
\newcommand{\mc}[1]{\mathcal{#1}}
\newtheorem{theorem-intro}[]{Theorem}
\numberwithin{equation}{subsection}
\newtheorem{theorem}[equation]{Theorem}
\newtheorem{lemma}[equation]{Lemma}
\newtheorem{proposition}[equation]{Proposition}
\newtheorem{definition}[equation]{Definition}
\theoremstyle{definition}
\newtheorem{remark}[equation]{Remark}
\begin{document}

\title[On Fundamental Group-Schemes]{Fundamental Group Schemes of $n$-fold Symmetric Product of a Smooth Projective Curve} 

\author[A. Paul]{Arjun Paul} 

\address{Department of Mathematics, Indian Institute of Technology Bombay, Powai, Mumbai 400076, Maharashtra, India.} 

\email{arjun.math.tifr@gmail.com} 

\author[R. Sebastian]{Ronnie Sebastian} 

\address{Department of Mathematics, Indian Institute of Technology Bombay, Powai, Mumbai 400076, Maharashtra, India.} 

\email{ronnie@math.iitb.ac.in} 

\subjclass[2010]{14J60, 14F35, 14L15, 14C05}

\keywords{Finite vector bundle, $S$-fundamental group scheme, Hilbert scheme, semistable bundle, Tannakian category.} 


\begin{abstract}
	Let $k$ be an algebraically closed field of characteristic $p > 0$. 
	Let $X$ be an irreducible smooth projective curve of genus $g$ over $k$. 
	Fix an integer $n \geq 2$, and let $S^n(X)$ be the $n$-fold symmetric product of $X$. 
	In this article we find the $S$-fundamental group scheme and Nori's 
	fundamental group scheme of $S^n(X)$. 
\end{abstract}

\maketitle

\section{Introduction}
For a connected reduced complete scheme $X$ defined over a perfect field $k$ and having a $k$-rational 
point $x$, in \cite{No1, No2}, Nori introduced an affine $k$-group scheme $\pi^N(X, x)$ associated to 
the neutral Tannakian category of essentially finite vector bundles on $X$, known as 
{\it Nori's fundamental group scheme}. This group scheme carries more informations 
than the \'etale fundamental group scheme $\pi^{\et}(X, x)$ in positive characteristic, and is the same as 
$\pi^{\et}(X, x)$ when $k = \mathbb{C}$. For a connected smooth projective curve defined over an 
algebraically closed field $k$, in \cite{BPS}, Biswas, Parameswaran and Subramanian defined 
and studied the {\it $S$-fundamental group scheme} $\pi^S(X, x)$ of $X$. 
This is further generalized and extensively studied for higher dimensional smooth projective varieties over 
algebraically closed fields by Langer in \cite{La, La2}. 
It is an interesting question to find $\pi^{\et}(X, x)$, $\pi^N(X, x)$ and $\pi^S(X, x)$ 
for well-known algebraic varieties.

Let $X$ be a connected smooth projective curve defined over an algebraically closed field $k$ of characteristic $p > 0$. 
Fix an integer $n \geq 2$, and let $S_n$ be the permutation group of $n$ symbols. Then $S_n$ acts on $X^n$ by 
permutation of its factors, and the associated quotient $S^n(X) := X^n/S_n$ is a connected smooth projective 
variety over $k$. For any affine $k$-group scheme $G$ we denote by $G_{\rm ab}$ its abelianization. 
In this article we prove the following results. 

\begin{theorem-intro}[Theorem \ref{main-thm-S-fgs}]
	For any closed point $x \in X(k)$, there is an isomorphism of affine $k$-group schemes 
	$$\widetilde{\psi_*^S} : \pi^S(X, x)_{\rm ab} \longrightarrow \pi^S(S^n(X), nx).$$
\end{theorem-intro}

\begin{theorem-intro}[Theorem \ref{main-thm-N-fgs}]
	For any closed point $x \in X(k)$, there is an isomorphism of affine $k$-group schemes 
	$$\widetilde{\psi_*^N} : \pi^N(X, x)_{\rm ab} \longrightarrow \pi^N(S^n(X), nx).$$ 
\end{theorem-intro}

As a consequence we also obtain the following result, 
which is already contained in \cite{BH}, and proved using a different method.
	For any closed point $x \in X(k)$, there is an isomorphism of affine $k$-group schemes 
	$$\widetilde{\psi_*^{\et}} : \pi^{\et}(X, x)_{\rm ab} \longrightarrow \pi^{\et}(S^n(X), nx).$$ 
Note that when $n>2g-2$, where $g$ is the genus of $X$, these isomorphisms can be easily obtained from results in 
\cite[Section 7]{La2}, since $S^n(X)$ is a projective bundle over ${\rm Alb}(X)$. 
We prove the above results without any restriction on $n$. 
Our initial strategy was to use the same 
method as in \cite{PS19} under the assumption that ${\rm char}(k) > 3$. 
However, we observed that one can avoid using the 
characterization of numerically flat sheaves as strongly semistable reflexive sheaves with vanishing 
Chern classes; proved in \cite{La}. Instead, we first show that $\widetilde{\psi_*^S}$
is faithfully flat and then use \cite[Section 7]{La2} to conclude that 
it is an isomorphism.

\section{Fundamental Group Schemes}
Let $k$ be an algebraically closed field. 
Let $X$ be a reduced proper $k$-scheme, which is connected in the sense that $H^0(X, \mc O_X) \cong k$. 

\subsection{$S$-fundamental group scheme}
Let ${\rm Coh}(X)$ be the category of coherent sheaf of $\mathcal{O}_X$-modules on $X$. This has a full 
subcategory ${\rm Vect}(X)$, whose objects are locally free coherent sheaves (vector bundles) on $X$. 
A vector bundle $E$ on $X$ is said to be {\it nef} if $\mathcal{O}_{\mathbb{P}(E)}(1)$ is a nef line bundle 
on $\mathbb{P}(E)$. 
An object $E$ of ${\rm Coh}(X)$ is said to be {\it numerically flat} if $E$ is locally free and both 
$E$ and its dual $E^{\vee}$ are nef. 
Let $\mathscr{C}^{\rm nf}(X)$ be the full subcategory of ${\rm Coh}(X)$, whose objects are numerically 
flat vector bundles on $X$. 
It is known that, $E \in {\rm Ob}({\rm Coh}(X))$ is an object of $\mathscr{C}^{\rm nf}(X)$ if and only if 
$E$ is locally free and for any smooth projective curve $C$ over $k$ and any morphism $f : C \longrightarrow X$, 
its pullback $f^*E$ on $C$ is slope semistable and of degree $0$ (see \cite[Remark 5.2]{La}). 
Note that $\mathscr{C}^{\rm nf}(X)$ is closed under finite direct sum and tensor products. 
Choosing a closed point $x \in X(k)$, one can define a fiber functor 
$$T_x : \mathscr{C}^{\rm nf}(X) \longrightarrow {\rm Vect}_k$$ 
by sending an object $E$ of $\mathscr{C}^{\rm nf}(X)$ to its fiber $E_x$ at $x$. 
The quadruple $(\mathscr{C}^{\rm nf}(X), \otimes, \mc O_X, T_x)$ is a neutral Tannakian category 
(see \cite[Proposition 5.5]{La}), and the affine $k$-group scheme $\pi^S(X, x)$ Tannaka dual to this is 
known as the {\it $S$-fundamental group scheme} of $X$ with base point $x$.

Let $X$ be a connected smooth projective variety of dimension $d$ over $k$. 
Fix an ample divisor $H$ on $X$. Let ${\rm Vect}_0^s(X)$ be the full subcategory of ${\rm Coh}(X)$, 
whose objects are reflexive coherent sheaves $E$ on $X$, that are strongly $H$-semistable and 
${\rm ch}_1(E)\cdot H^{d-1} = {\rm ch}_2(E)\cdot H^{d-2} = 0$, where ${\rm ch}_i(E)$ is the $i$-th 
Chern character of $E$. 
It is shown in \cite[Proposition 5.1]{La} that the objects of the category ${\rm Vect}_0^s(X)$ are 
in fact locally free coherent sheaves on $X$ and all of their Chern classes vanishes. 
It follows from \cite[Proposition 4.5]{La} that the category ${\rm Vect}_0^s(X)$ does not depend on 
choice of $H$. For $X$ smooth, the categories $\mathscr{C}^{\rm nf}(X)$ and ${\rm Vect}_0^s(X)$ are 
the same (see \cite[Proposition 5.1]{La}, \cite[Theorem 2.2]{La2}). We will not 
use this characterization here, however, this was crucial in \cite{PS19}.

It is clear from the definition of the categories ${\rm Vect}_0^s(X)$ and ${\rm EF}(X)$ that 
$\pi^S(X, x)$ carries more informations than $\pi^N(X, x)$. 
In fact, there are natural faithfully flat homomorphisms of affine $k$-group schemes 
$\pi^S(X, x) \longrightarrow \pi^N(X, x) \longrightarrow \pi^{\et}(X, x)$, (see \cite[Lemma 6.2]{La}).

\subsection{Nori's fundamental group scheme}

\begin{definition}
	A vector bundle $E$ on $X$ is said to be {\it finite} if there are two distinct non-zero polynomials 
	$f$ and $g$ with positive integer coefficients such that $f(E) \cong g(E)$. 
	
	A vector bundle $E$ on $X$ is said to be {\it essentially finite} if there are finitely many finite 
	vector bundles $E_1, \ldots, E_n$ and two numerically flat vector bundles $V_1$ and $V_2$ with 
	$V_2 \subseteq V_1 \subseteq \bigoplus\limits_{i=1}^n E_i$ such that $E \cong V_1/V_2$. 
\end{definition}

Let ${\rm EF}(X)$ be the full subcategory of ${\rm Vect}(X)$ whose objects are essentially finite vector 
bundles on $X$. Then ${\rm EF}(X)$ is an abelian rigid tensor category. 
Let ${\rm Vect}_k$ be the category of $k$-vector spaces. Fixing a closed point $x \in X(k)$, 
we have a fiber functor 
$$T_x : {\rm EF}(X) \longrightarrow {\rm Vect}_k$$
defined by sending a vector bundle $E \in {\rm Ob}({\rm EF}(X))$ to its fiber $E_x$ at $x$. 
This makes the quadruple $({\rm EF}(X), \otimes, \mc O_X, T_x)$ a neutral Tannakian category. 
The affine $k$-group scheme $\pi^N(X, x)$ Tannaka dual to this category is called  
{\it Nori's fundamental group scheme} of $X$ with base point $x$.

\section{Fundamental Group Schemes of $S^n(X)$}
\subsection{Symmetric product of curve}
Let $k$ be an algebraically closed field of characteristic $p > 0$. 
Let $X$ be an irreducible smooth projective curve over $k$. 
Fix an integer $n \geq 2$, and let us denote by $S_n$ the permutation group of $n$ symbols. 
There is a natural action of $S_n$ on the $n$-fold product $X^n$, and the associated 
quotient $X^n/S_n$, denoted by $S^n(X)$, is a smooth projective variety of dimension $n$ over $k$. 
Note that any closed point $q \in S^n(X)$ can be uniquely written as $\sum\limits_{i=1}^r n_ix_i$, 
where $x_1, \ldots, x_r$ are distinct closed points of $X$ and $n_1, \ldots, n_r$ are integers 
with 
\begin{equation*}
n_1 \geq \ldots \geq n_r \geq 1. 
\end{equation*}
We call $\langle n_1, \ldots, n_r \rangle$ the {\it type} of $q$. 
The quotient morphism 
\begin{equation}\label{quotient-map}
	\psi : X^n \longrightarrow S^n(X) 
\end{equation}
is a faithfully flat finite morphism of $k$-schemes. 

\subsection{A group theoretic lemma} 
A proof of the following easy Lemma can be found in \cite{PS19}.
\begin{lemma}\label{group-lemma}
	Let $G$ and $H$ be two group schemes over $k$. For an integer $n \geq 2$, we denote by $G^n$ the 
	group scheme $G \times \cdots \times G$. 
	Then $S_n$ acts on $G^n$ by permuting the factors. 
	Let $f_0$ be the following composite group homomorphism 
	$$f_0 : G^n \stackrel{\alpha^n}{\longrightarrow} (G_{\rm ab})^n \stackrel{m}{\longrightarrow} G_{\rm ab}\,,$$ 
	where $\alpha : G \to G_{\rm ab} := G/[G, G]$ denotes the abelianization homomorphism and $m$ denotes the 
	multiplication homomorphism. 
	Then a homomorphism of $k$-group schemes $f : G^n \longrightarrow H$ is $S_n$-invariant if and only if there 
	is a homomorphism $\tilde f : G_{\rm ab} \longrightarrow H$ of affine $k$-group schemes such that 
	$\tilde f \circ f_0 = f$. In other words, $f$ is $S_n$-invariant iff there if $\tilde f$ which makes 
	the following diagram commutes. 
	\[
	\xymatrix{
		G^{n}\ar[rr]^f\ar[rd]_{f_0} && H\\
		& G_{\rm ab}\ar[ru]_{\tilde f} & 
	}
	\] 
\end{lemma}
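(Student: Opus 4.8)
The plan is to prove both implications of the biconditional, with the forward direction (existence of $\tilde f$) being the substantive one. First I would dispose of the easy direction: suppose such a $\tilde f : G_{\rm ab} \to H$ exists with $\tilde f \circ f_0 = f$. Since $\alpha^n$ intertwines the $S_n$-action on $G^n$ with the permutation action on $(G_{\rm ab})^n$, and since $m$ is the multiplication map into the \emph{abelian} group scheme $G_{\rm ab}$, the composite $f_0 = m \circ \alpha^n$ is manifestly $S_n$-invariant (commutativity of $G_{\rm ab}$ makes $m$ symmetric in its factors). Hence $f = \tilde f \circ f_0$ is $S_n$-invariant as a composite of an invariant map with a fixed map. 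This direction requires only that $f_0$ itself be invariant, which follows from the defining property of abelianization.

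For the forward direction, I would start from an $S_n$-invariant homomorphism $f : G^n \to H$ and construct $\tilde f$. The key structural observation is that $f_0$ is a faithfully flat (in particular, epimorphic) homomorphism onto $G_{\rm ab}$: indeed $\alpha^n$ is faithfully flat since $\alpha$ is, and $m : (G_{\rm ab})^n \to G_{\rm ab}$ admits the diagonal-into-first-factor section, so $m$ is a split epimorphism. Thus to define $\tilde f$ it suffices to show that $f$ factors set-theoretically (functorially, on $R$-points for every $k$-algebra $R$) through $f_0$, i.e. that $\ker(f_0) \subseteq \ker(f)$ as group functors; then the universal property of the quotient by a normal subgroup scheme yields a unique $\tilde f$ with $\tilde f \circ f_0 = f$, and this $\tilde f$ is automatically a homomorphism.

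The heart of the matter, and the step I expect to be the main obstacle, is identifying $\ker(f_0)$ and verifying the inclusion $\ker(f_0) \subseteq \ker(f)$ functorially. I would work on $R$-points and describe $\ker(f_0)(R)$ as those tuples $(g_1, \dots, g_n) \in G(R)^n$ whose images in $G_{\rm ab}(R)$ multiply to the identity, i.e. $\prod_i \alpha(g_i) = e$ in $G_{\rm ab}(R)$. Two facts drive the argument. First, $S_n$-invariance of $f$ means $f(g_{\sigma(1)}, \dots, g_{\sigma(n)}) = f(g_1, \dots, g_n)$ for all $\sigma$; combined with the homomorphism property of $f$, this forces $f$ to be insensitive to commutator-type rearrangements and to reorderings, so that $f$ kills all elements of the form $([a,b], e, \dots, e)$ (from commutators, via the transposition symmetry) and identifies tuples differing by moving a factor's abelianized class around. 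Concretely, $\ker(f_0)$ is generated as a normal subgroup functor by elements of two shapes: commutators embedded in a single slot, and ``transfer'' elements $(g, g^{-1}, e, \dots, e)$-type expressions reflecting that only the total abelianized product matters. The delicate point is to show $f$ vanishes on this generating set using only invariance and multiplicativity, and to check the argument is uniform in $R$ so that it genuinely produces a morphism of group schemes rather than merely a map on geometric points. Once the inclusion of kernels is secured at the level of functors of points, representability of the quotient $G_{\rm ab} = G^n/\ker(f_0)$ via $f_0$ and Yoneda give the desired factorization, and uniqueness of $\tilde f$ follows from $f_0$ being an epimorphism.
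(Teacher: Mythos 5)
Your easy direction is correct and complete: $f_0 = m\circ\alpha^n$ is $S_n$-invariant because $G_{\rm ab}$ is commutative, so any $f=\tilde f\circ f_0$ is invariant. The forward direction, however, has a genuine gap at exactly the point you yourself flag as delicate. Your plan is to prove $\ker(f_0)\subseteq\ker(f)$ by exhibiting generators of $\ker(f_0)$ and killing them, but the generating-set claim does not work at the level of $R$-points in the naive way you state it. Telescoping does show that any $(g_1,\dots,g_n)\in\ker(f_0)(R)$ is a product of transfer elements of the shape $(g,g^{-1},e,\dots,e)$ (in various slots) and one element $(e,\dots,e,c)$ with $c\in\ker(\alpha)(R)=[G,G](R)$, and $f$ kills the transfers because $S_n$-invariance gives $f\circ i_j=f\circ i_1=:\beta$ for the slot inclusions $i_j:G\to G^n$. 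But $[G,G](R)$ is \emph{not} in general the set of products of commutators of elements of $G(R)$: quotients of affine group schemes are fppf quotients, so $G_{\rm ab}(R)\neq G(R)/[G(R),G(R)]$ in general. Hence verifying $f([a,b],e,\dots,e)=e$ for points $a,b$ does not yet give $f(e,\dots,e,c)=e$ for all $c\in[G,G](R)$. What you actually need is that the homomorphism $\beta:G\to H$ is trivial on the subgroup \emph{scheme} $[G,G]$, equivalently that $\beta$ factors through $\alpha:G\to G_{\rm ab}$; your outline never establishes this.

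That missing step has a short direct proof, and once you have it the kernel and faithful-flatness machinery becomes unnecessary; this direct route is the proof in \cite{PS19}, to which the paper defers for Lemma \ref{group-lemma} (the paper gives no proof of its own). Since $i_1(a)$ and $i_2(b)$ commute in $G^n$ (this is where $n\geq 2$ enters), one has $\beta(a)\beta(b)=f(i_1(a)\,i_2(b))=f(i_2(b)\,i_1(a))=\beta(b)\beta(a)$ for all $R$-points $a,b$, using that $f$ is a homomorphism and $f\circ i_2=f\circ i_1$ from invariance under the transposition $(1\,2)$. So the image of $\beta$ is a commutative subgroup scheme of $H$, and by the universal property of abelianization $\beta=\tilde f\circ\alpha$ for a unique $\tilde f:G_{\rm ab}\to H$. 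Then, since $(g_1,\dots,g_n)=\prod_j i_j(g_j)$, multiplicativity gives
\begin{equation*}
f(g_1,\dots,g_n)=\prod_j \beta(g_j)=\tilde f\Bigl(\prod_j \alpha(g_j)\Bigr)=\tilde f\bigl(f_0(g_1,\dots,g_n)\bigr),
\end{equation*}
i.e., $f=\tilde f\circ f_0$. Your observation that $f_0$ is faithfully flat is correct and does yield uniqueness of $\tilde f$, but the quotient universal property you invoke would in any case require the factorization of $\beta$ above; so your outline, once completed, collapses to this direct argument rather than offering a genuinely independent route.
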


\subsection{Construction of homomorphism}
The functor which sends $E\in \mathscr{C}^{\rm nf}(S^n(X))$ to 
$\psi^*E\in \mathscr{C}^{\rm nf}(X^n)$ 
defines a morphism of neutral Tannakian categories (for any closed point $p \in X^n(k)$)
\begin{equation}\label{functor-F}
	\mathscr{F} : (\mathscr{C}^{\rm nf}(S^n(X)), \otimes, \mc O_{S^n(X)}, T_{\psi(p)}) \to 
	(\mathscr{C}^{\rm nf}(X^n), \otimes, \mc O_{X^n}, T_p). 
\end{equation}
Thus, we get a homomorphism
\begin{equation*}
		\psi_*^S : \pi^S(X^n, p) \longrightarrow \pi^S(S^n(X), \psi(p)). 
\end{equation*}

It follows from \cite[Theorem 4.1, p. 842]{La2} that, for any closed point $x \in X(k)$, 
the natural homomorphism of affine $k$-group schemes 
\begin{equation*}
	\pi^S(X^n, (x, \ldots, x)) \stackrel{\simeq}{\longrightarrow} \pi^S(X, x)^n\,. 
\end{equation*}
is an isomorphism. By abuse of notation, denote by $\psi_*^S$ the composite 
of the inverse of this isomorphism and $\psi^S_*$. 
So now 
\begin{equation}\label{hom-of-S-fgs}
\psi^S_*:\pi^S(X,x)^n\to \pi^S(S^n(X),nx),
\end{equation}
where $nx = \psi(x, \ldots, x)$.

The natural $S_n$-action on $X^n$ gives rise to automorphisms 
$\sigma_*$ of the affine $k$-group scheme 
$\pi^S(X^n, (x, \ldots, x)) \cong \pi^S(X, x)^n$, for all $\sigma \in S_n$. 
Now one can check that $\psi_*^S\circ\sigma_* = \psi_*^S$, where $\psi_*^S$ is the homomorphism 
defined in \eqref{hom-of-S-fgs} with $p = (x, \ldots, x)$. 
Consider the natural homomorphism of affine $k$-group schemes 
\begin{equation*}
	\phi : \pi^S(X, x)^n \longrightarrow \pi^S(X, x)_{\rm ab}
\end{equation*}
defined as the following composite homomorphism 
$$\pi^S(X, x)^n \longrightarrow (\pi^S(X, x)_{\rm ab})^n \stackrel{m}{\longrightarrow} \pi^S(X, x)_{\rm ab},$$
where the first homomorphism is given by abelianization at each factor, and the second homomorphism is the 
multiplication. Then it follows from Lemma \ref{group-lemma} that the homomorphism $\psi_*^S$ in 
\eqref{hom-of-S-fgs} factors through a homomorphism 
\begin{equation}\label{pi-S-ab-hom}
	\widetilde{\psi_*^S} : \pi^S(X, x)_{\rm ab} \longrightarrow \pi^S(S^n(X), nx). 
\end{equation}
We record the above discussion in the following proposition.
\begin{proposition}
The map 
$$\psi^S_*:\pi^S(X^n, (x,\ldots,x)) \longrightarrow \pi^S(S^n(X), \psi(x,\ldots,x))$$ 
factors to give a homomorphism 
$\widetilde{\psi_*^S} : \pi^S(X, x)_{\rm ab} \longrightarrow \pi^S(S^n(X), nx)$. 
\end{proposition}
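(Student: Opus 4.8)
The plan is to reduce the statement to the purely group-theoretic Lemma \ref{group-lemma} by verifying that the homomorphism $\psi_*^S$ of \eqref{hom-of-S-fgs} is invariant under the permutation action of $S_n$ on $\pi^S(X, x)^n$. First I would recall that $\psi_*^S$ arises, via Tannaka duality, from the pullback functor $\mathscr{F}$ of \eqref{functor-F}, and that under Langer's isomorphism $\pi^S(X^n, (x,\ldots,x)) \cong \pi^S(X, x)^n$ the source is identified with the $n$-fold product. The assignment $Y \mapsto \pi^S(Y, \cdot)$ is covariant on pointed schemes (pullback is contravariant, and Tannaka duality reverses the arrow once more), so every $\sigma \in S_n$, viewed as an automorphism of $(X^n, (x,\ldots,x))$, induces an automorphism $\sigma_*$ of $\pi^S(X^n, (x,\ldots,x))$.

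The crucial input is that $\psi$ is the quotient morphism for the $S_n$-action, hence $\psi \circ \sigma = \psi$ for every $\sigma \in S_n$. Functoriality of $\pi^S$ then gives $\psi_*^S \circ \sigma_* = (\psi \circ \sigma)_*^S = \psi_*^S$. Here I would be careful about base points: since $\sigma$ fixes $(x,\ldots,x)$ and $\psi(x,\ldots,x) = nx$, the relevant fiber functors agree on the nose, so the equality of pullback functors $\sigma^* \circ \psi^* = \psi^*$ on $\mathscr{C}^{\rm nf}(S^n(X))$ descends directly to the stated equality of group-scheme homomorphisms. I would then check that Langer's isomorphism $\pi^S(X^n, (x,\ldots,x)) \cong \pi^S(X, x)^n$ intertwines $\sigma_*$ with the permutation of the factors, so that the identity $\psi_*^S \circ \sigma_* = \psi_*^S$ is exactly $S_n$-invariance in the sense required by Lemma \ref{group-lemma}.

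With $S_n$-invariance established, I would apply Lemma \ref{group-lemma} with $G = \pi^S(X, x)$ and $H = \pi^S(S^n(X), nx)$, taking $f = \psi_*^S$. The canonical map $f_0 : G^n \to G_{\rm ab}$ appearing in that lemma is precisely the homomorphism $\phi$ displayed just before \eqref{pi-S-ab-hom}, so the lemma produces a unique factorization $\widetilde{\psi_*^S} : \pi^S(X, x)_{\rm ab} \to \pi^S(S^n(X), nx)$ satisfying $\widetilde{\psi_*^S} \circ \phi = \psi_*^S$, which is the assertion.

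I expect the main obstacle to be the bookkeeping in the second paragraph rather than any deep point: one must confirm that the isomorphism $\pi^S(X^n, (x,\ldots,x)) \cong \pi^S(X, x)^n$ of \cite{La2} is genuinely $S_n$-equivariant, i.e.\ that the automorphism induced by permuting the factors of $X^n$ corresponds under this isomorphism to the same permutation of the factors of $\pi^S(X, x)^n$. This is natural given that the Künneth-type isomorphism is built from external tensor products, but it is the step where the abstract Tannakian identification must be matched concretely with the permutation action, and I would treat it as the heart of the verification.
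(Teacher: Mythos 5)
Your proposal is correct and takes essentially the same route as the paper: the paper's own argument likewise deduces the $S_n$-invariance $\psi_*^S \circ \sigma_* = \psi_*^S$ from $\psi \circ \sigma = \psi$ via functoriality, and then applies Lemma \ref{group-lemma} with $G = \pi^S(X,x)$, $H = \pi^S(S^n(X), nx)$ and $f_0 = \phi$ to obtain the factorization \eqref{pi-S-ab-hom}. Your attention to base points and to the $S_n$-equivariance of Langer's isomorphism $\pi^S(X^n,(x,\ldots,x)) \cong \pi^S(X,x)^n$ merely makes explicit the details the paper dispatches with ``one can check.''
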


A vector bundle $E$ on $X^n$ is said to be {\it $S_n$-invariant} if $\sigma^*E \cong E$, 
for all $\sigma \in S_n$. 

\begin{proposition}\label{S_n-equivariance-str}
	Let $E$ be a vector bundle in $\mathscr{C}^{\rm nf}(X^n)$ associated to 
	a representation $\rho:\pi^S(X^n,(x,\ldots,x))\cong \pi^S(X,x)^n\to \GL(V)$.
	If $\rho$ factors through $\pi^S(X,x)_{\rm ab}$, as in Lemma \ref{group-lemma},
	then $E$ is $S_n$-invariant.
\end{proposition}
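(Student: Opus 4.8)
The plan is to reduce the geometric assertion $\sigma^*E\cong E$ to an identity between representations of the Tannakian fundamental group scheme, where the hypothesis becomes directly available from Lemma \ref{group-lemma}. The first thing I would pin down is how the geometric $S_n$-action interacts with the identification $\pi^S(X^n,(x,\ldots,x))\cong\pi^S(X,x)^n$. Since the base point $(x,\ldots,x)$ is fixed by every $\sigma\in S_n$, each $\sigma$ is a pointed automorphism of $X^n$ and so induces an automorphism $\sigma_*$ of $\pi^S(X^n,(x,\ldots,x))$; because the isomorphism $\pi^S(X^n,(x,\ldots,x))\cong\pi^S(X,x)^n$ of \cite[Theorem 4.1]{La2} is built from the factor projections, this $\sigma_*$ is precisely the permutation of factors appearing in Lemma \ref{group-lemma}.

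The main step is the Tannakian compatibility between pullback and precomposition. Under the equivalence between $\mathscr{C}^{\rm nf}(X^n)$ and representations of $\pi^S(X^n,(x,\ldots,x))$, the object $E$ corresponds to the representation $\rho$ on the fibre $E_{(x,\ldots,x)}$. I would observe that, because $\sigma$ fixes the base point, one has $(\sigma^*E)_{(x,\ldots,x)}=E_{(x,\ldots,x)}$ as vector spaces and $T_{(x,\ldots,x)}\circ\sigma^*\cong T_{(x,\ldots,x)}$ as fibre functors; by the Tannakian formalism this isomorphism of fibre functors is implemented by the induced automorphism of the group scheme, so the representation attached to $\sigma^*E$ is $\rho\circ\sigma_*$. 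This is the delicate point, since one must verify that the automorphism produced by the Tannakian machine coincides with the $\sigma_*$ coming from the geometric action; this naturality check is where I expect the only real work to lie.

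Granting this compatibility, the conclusion is immediate. By hypothesis $\rho$ factors as $\rho=\tilde\rho\circ f_0$ with $f_0$ as in Lemma \ref{group-lemma}. Since $\pi^S(X,x)_{\rm ab}$ is abelian, the map $f_0(g_1,\ldots,g_n)=\alpha(g_1)\cdots\alpha(g_n)$ is unchanged under any permutation of its arguments, so $f_0\circ\sigma_*=f_0$ and hence $\rho\circ\sigma_*=\tilde\rho\circ f_0\circ\sigma_*=\tilde\rho\circ f_0=\rho$. Therefore $\sigma^*E$ and $E$ correspond to the same representation of $\pi^S(X^n,(x,\ldots,x))$, whence $\sigma^*E\cong E$ for every $\sigma\in S_n$; that is, $E$ is $S_n$-invariant. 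Note that the argument is insensitive to the sign convention, i.e.\ whether pullback corresponds to $\rho\circ\sigma_*$ or $\rho\circ\sigma_*^{-1}$, because the identity $\rho\circ\sigma_*=\rho$ holds simultaneously for all $\sigma\in S_n$.
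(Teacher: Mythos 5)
Your argument is correct and is essentially the paper's own proof, which compresses the whole thing into two sentences (``From the hypothesis it follows that $\rho\circ\sigma_*=\rho$. The proposition follows.''); you have simply made explicit the two points the paper leaves implicit, namely that the geometric $\sigma_*$ is the permutation of factors under the isomorphism $\pi^S(X^n,(x,\ldots,x))\cong\pi^S(X,x)^n$ and that the Tannakian dictionary sends $\sigma^*E$ to $\rho\circ\sigma_*$, after which $f_0\circ\sigma_*=f_0$ (symmetry of multiplication in the abelian target) yields the claim exactly as in the paper.
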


\begin{proof}
From the hypothesis it follows that $\rho\circ \sigma_*=\rho$. The 
proposition follows. 
\end{proof}

\subsection{Faithfully flat}
In this subsection we use \cite[Proposition 2.21]{DMOS} to show that 
the homomorphism $\widetilde{\psi_*^S}$ in \eqref{pi-S-ab-hom} is faithfully flat. 
We begin by recalling this result for the convenience of the reader. 
Let $\theta : G \longrightarrow G'$ 
be a homomorphism of affine group schemes over $k$, and let 
\begin{equation}\label{eqn-hom-f}
	\widetilde{\theta} : \Rep_k(G') \longrightarrow \Rep_k(G) 
\end{equation}
be the functor given by sending $\rho' : G' \to \GL(V)$ to $\rho'\circ \theta : G \to \GL(V)$. 
An object $\rho : G \to \GL(V)$ in $\Rep_k(G)$ is said to be a {\it subquotient} of an object 
$\eta : G \to \GL(W)$ in $\Rep_k(G)$ if there are two $G$-submodules $V_1 \subset V_2$ of $W$ 
such that $V \cong V_2/V_1$ as $G$-modules. 

\begin{proposition}[Proposition 2.21, \cite{DMOS}]\label{DM}
	Let $\theta : G \longrightarrow G'$ be a homomorphism of affine algebraic groups over $k$. Then 
	\begin{enumerate}[(a)]
		\item $\theta$ is faithfully flat if and only if the functor $\widetilde{\theta}$ in \eqref{eqn-hom-f} 
		is fully faithful and given any subobject $W \subset \widetilde{\theta}(V')$, with $V' \in \Rep_k(G')$, 
		there is a subobject $W' \subset V'$ in $\Rep_k(G')$ such that $\widetilde{\theta}(W') \cong W$ in $\Rep_k(G)$. 
		
		\item $f$ is a closed immersion if and only if every object of $\Rep_k(G)$ 
		is isomorphic to a subquotient of an object of the form $\widetilde{\theta}(V')$, 
		for some $V' \in \Rep_k(G')$. 
	\end{enumerate}
\end{proposition}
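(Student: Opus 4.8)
The plan is to pass from the group schemes to their coordinate Hopf algebras and reformulate both assertions as statements about comodules. Write $A = \mathcal{O}(G)$ and $A' = \mathcal{O}(G')$ for the coordinate Hopf algebras and let $\phi := \theta^* : A' \to A$ be the induced homomorphism of commutative Hopf algebras over $k$. Under the standard equivalence, $\Rep_k(G)$ is the category of finite-dimensional right $A$-comodules and $\widetilde{\theta}$ is corestriction of comodules along $\phi$, sending an $A'$-comodule $(V', \rho')$ to the same underlying space equipped with the coaction $(1 \otimes \phi) \circ \rho'$. I would take as the basic dictionary the two facts that $\theta$ is faithfully flat if and only if $\phi$ is injective, and $\theta$ is a closed immersion if and only if $\phi$ is surjective; the former holds because the scheme-theoretic image of $\theta$ is the closed subgroup with coordinate ring $\mathrm{im}(\phi)$, and the latter is the usual description of closed immersions of affine schemes.

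For part (a), consider first the forward direction and assume $\phi$ injective. Full faithfulness of $\widetilde{\theta}$ reduces to the claim that $U^{G} = U^{G'}$ for every $G'$-representation $U$, where on the left $U$ carries the pulled-back coaction; this is immediate since the two invariant subspaces are the equalizers of coaction maps that differ only by the injective map $1 \otimes \phi$, and applying it to $U = \Hom_k(V', W')$ yields $\Hom_G(\widetilde{\theta} V', \widetilde{\theta} W') \cong \Hom_{G'}(V', W')$. For the subobject-lifting property, the essential input is that an injective homomorphism of commutative Hopf algebras over a field is automatically faithfully flat; granting this, faithfully flat descent matches the $A$-subcomodules of $\widetilde{\theta}(V')$ with the $A'$-subcomodules of $V'$, which is exactly the lifting required. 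For the converse, I would factor $\theta$ as the faithfully flat map onto its image $H \subseteq G'$ followed by the closed immersion $H \hookrightarrow G'$; since the first factor already satisfies the two conditions, a strict inclusion $H \subsetneq G'$ would make restriction along $H \hookrightarrow G'$ fail full faithfulness (the regular representation of $G'$ has strictly more $H$-invariants than $G'$-invariants), so the hypotheses force $H = G'$ and hence $\phi$ injective.

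For part (b), assume first that $\phi$ is surjective. Every finite-dimensional $A$-comodule embeds into a finite sum of copies of the regular comodule $A$, and $\phi$ realizes the regular comodule $A$ as a quotient of $\widetilde{\theta}(A')$ because $\phi$ is a surjective map of coalgebras; passing to finite-dimensional subcomodules of $A'$ then exhibits any representation of $G$ as a subquotient of a pullback $\widetilde{\theta}(V')$. Conversely, if every representation of $G$ is a subquotient of some $\widetilde{\theta}(V')$, then every matrix coefficient of every $G$-representation lies in $\mathrm{im}(\phi)$, because the matrix coefficients of $\widetilde{\theta}(V')$ are the $\phi$-images of those of $V'$ and matrix coefficients can only shrink under passage to subquotients; since the matrix coefficients of all representations span $A$, this forces $\mathrm{im}(\phi) = A$.

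The step I expect to be the main obstacle is the subobject-lifting half of part (a): it is the one place where a genuinely non-formal theorem enters, namely the faithful flatness of an injective homomorphism of commutative Hopf algebras over a field, without which descent of subcomodules cannot be carried out. Everything else is a careful unwinding of the comodule dictionary together with the factorization of $\theta$ through its scheme-theoretic image.
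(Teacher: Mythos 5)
First, a point of comparison: the paper gives no proof of this statement at all --- it is recalled verbatim from \cite[Proposition 2.21]{DMOS} --- so your attempt can only be measured against the standard Tannakian proof (DMOS, or Deligne--Milne's notes). Against that standard, your part (b) is correct in both directions (the coefficient-space argument for the converse and the embedding of a comodule into copies of the regular comodule are exactly the usual proof), and the forward direction of (a) is also sound: the dictionary ``$\theta$ faithfully flat $\Leftrightarrow$ $\phi$ injective'' is right over a field, you correctly flag Takeuchi's theorem (injective map of commutative Hopf algebras over a field is faithfully flat) as the non-formal input, and the invariants argument via injectivity of $1\otimes\phi$ gives full faithfulness. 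In fact, for the subobject-lifting step you do not even need descent: if $W\subset V'$ satisfies $(1\otimes\phi)\rho'(W)\subset W\otimes A$, then composing $\rho'$ with $V'\otimes A'\to (V'/W)\otimes A'\hookrightarrow (V'/W)\otimes A$ and using injectivity of $\phi$ already forces $\rho'(W)\subset W\otimes A'$.

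The genuine gap is in your converse of (a). You claim that for a proper closed subgroup $H\subsetneq G'$ the restriction functor $\Rep_k(G')\to\Rep_k(H)$ must fail to be fully faithful because ``the regular representation of $G'$ has strictly more $H$-invariants than $G'$-invariants.'' This is false: the right-$H$-invariants of $\mc O(G')$ are the global functions on $G'/H$, which can reduce to the constants. Concretely, take $G'=\SL_2$ and $H=B$ a Borel subgroup, in any characteristic: $G'/B\cong\mathbb{P}^1$ is complete, so $\mc O(G')^B=k=\mc O(G')^{G'}$; moreover for \emph{every} finite-dimensional $G'$-module $U$ and $u\in U^B$, the orbit map $G'/B\to U$ is a morphism from a complete variety to an affine one, hence constant, so $U^B=U^{G'}$. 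Applying this to $U=\Hom_k(V',W')$ shows restriction along $B\hookrightarrow \SL_2$ is fully faithful, so your proposed contradiction never materializes. The correct converse (as in DMOS) must invoke the \emph{second} hypothesis via Chevalley's theorem: realize $H$ as the stabilizer of a line $L$ in some finite-dimensional $G'$-representation $V'$; then $L$ is a subobject of $\widetilde{\theta}(V')$ (since $G\to H$ is faithfully flat), and the subobject-lifting condition forces $L$ to be $G'$-stable, whence $H=G'$. Your write-up never uses the subobject condition in the converse direction, which is precisely the symptom of the missing idea --- the two conditions in (a) are not redundant, and parabolic subgroups are exactly the examples showing full faithfulness alone does not suffice.
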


\begin{proposition}\label{faithfully-flatness}
	The homomorphism 
	$$\widetilde{\psi_*^S} : \pi^S(X, x)_{\rm ab} \longrightarrow \pi^S(S^n(X), nx)$$ 
	defined in \eqref{pi-S-ab-hom} is faithfully flat. 
\end{proposition}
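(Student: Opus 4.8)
The plan is to apply Proposition \ref{DM}(a) not to $\widetilde{\psi_*^S}$ directly, but to the homomorphism $\psi_*^S : \pi^S(X^n,(x,\ldots,x)) \to \pi^S(S^n(X),nx)$ of \eqref{hom-of-S-fgs}, whose associated functor is the pullback $\mathscr{F}=\psi^*$ of \eqref{functor-F}. Once $\psi_*^S$ is shown to be faithfully flat, the conclusion follows formally: since $\psi_*^S=\widetilde{\psi_*^S}\circ\phi$, the map of coordinate Hopf algebras $k[\pi^S(S^n(X),nx)]\to k[\pi^S(X,x)^n]$ is injective, and it factors through $k[\pi^S(X,x)_{\rm ab}]$, so the first arrow $k[\pi^S(S^n(X),nx)]\to k[\pi^S(X,x)_{\rm ab}]$ is injective as well, which is exactly faithful flatness of $\widetilde{\psi_*^S}$. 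Thus it suffices to verify the two conditions of Proposition \ref{DM}(a) for $\psi^*$. The decisive feature I would exploit is that the base point $p=(x,\ldots,x)$ is fixed by every $\sigma\in S_n$, so the canonical isomorphisms $\lambda_\sigma:\sigma^*\psi^*E\xrightarrow{\sim}\psi^*E$ coming from $\psi\circ\sigma=\psi$ induce the \emph{identity} on the fibre $(\psi^*E)_p=E_{nx}$. Together with the rigidity of the fibre functor $T_p$ on $\mathscr{C}^{\rm nf}(X^n)$—an object, and any morphism or subobject between two such, is determined by its image under $T_p$—this forces all the relevant data to be $S_n$-invariant.

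For full faithfulness of $\psi^*$, I would set $F=E_1^\vee\otimes E_2\in\mathscr{C}^{\rm nf}(S^n(X))$ and reduce to showing that $H^0(S^n(X),F)\to H^0(X^n,\psi^*F)$ is an isomorphism. Injectivity is immediate since $\psi$ is dominant. For surjectivity, finite descent along the quotient gives $H^0(S^n(X),F)=H^0(X^n,\psi^*F)^{S_n}$ using $(\psi_*\mc O_{X^n})^{S_n}=\mc O_{S^n(X)}$, so it remains to check that $S_n$ acts trivially on $H^0(X^n,\psi^*F)$. Viewing a section as a morphism $\mc O_{X^n}\to\psi^*F$, hence as a vector in $(\psi^*F)_p$ fixed by $\pi^S(X^n,p)$, the triviality of $\lambda_\sigma$ on $(\psi^*F)_p$ together with faithfulness of $T_p$ show that each such section equals its $\sigma$-translate; hence every section is $S_n$-invariant.

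For the subobject condition, given $E\in\mathscr{C}^{\rm nf}(S^n(X))$ and a subobject $W_0\subseteq\psi^*E$ in $\mathscr{C}^{\rm nf}(X^n)$, the same two ingredients show that $\lambda_\sigma(\sigma^*W_0)$ and $W_0$ have the same fibre at $p$, hence coincide as subobjects; comparing fibres at an arbitrary point $q$ then yields the strong invariance $(W_0)_{\sigma q}=(W_0)_q$ inside $E_{\psi(q)}$ for all $q$ and all $\sigma$. Consequently the classifying morphism $X^n\to\mathrm{Gr}_r(\psi^*E)=\mathrm{Gr}_r(E)\times_{S^n(X)}X^n$ is constant on $S_n$-orbits, so by the universal property of the categorical quotient $X^n\to S^n(X)=X^n/S_n$ it descends to a section of $\mathrm{Gr}_r(E)$, i.e.\ to a subbundle $W'\subseteq E$ with $\psi^*W'=W_0$. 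Since $\psi$ is finite and surjective, $W'$ and $E/W'$ are again numerically flat by the curve criterion recalled above, so $W'$ is a subobject of $E$ lifting $W_0$, and Proposition \ref{DM}(a) then yields that $\psi_*^S$ is faithfully flat.

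The hard part will be precisely this descent in positive characteristic: because $\psi$ is ramified along the diagonals and $p$ may divide $n!$, there is no Reynolds averaging operator, so one cannot descend $S_n$-invariant data by the usual argument. The proposal sidesteps this by using the fixed base point together with rigidity to upgrade plain invariance $\sigma^*W_0\cong W_0$ to the pointwise statement $(W_0)_{\sigma q}=(W_0)_q$, after which descent is purely formal through the quotient. I expect the most delicate bookkeeping to be the verification that $T_p(\lambda_\sigma)=\mathrm{id}$ and the passage from equality of fibres at $p$ to equality of subsheaves—and thence to equality of fibres at every point—since it is exactly here that numerical flatness is indispensable.
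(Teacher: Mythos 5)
Your proposal is correct, but the engine of your argument is genuinely different from the paper's. Both proofs run through Proposition \ref{DM}(a), both exploit the identity $\psi^S_*\circ\sigma_*=\psi^S_*$ coming from the fixed base point $p=(x,\ldots,x)$, and your full-faithfulness step (every section of $\psi^*F$ equals its $\sigma$-translate because the two have the same fibre at $p$ and $T_p$ is faithful) is essentially the paper's argument that a morphism $\mc O_{X^n}\to\psi^*E$ is automatically $S_n$-equivariant. The real divergence is in the subobject condition. The paper takes a subbundle $\mc E_2\subset\psi^*E_1$ arising from a representation of the abelianization and inducts on rank: since $\pi^S(X,x)_{\rm ab}$ is abelian, \cite[Theorem 9.4]{Wa} yields a one-dimensional quotient representation containing $V_2$ in its kernel; the corresponding $S_n$-invariant line bundle descends by Fogarty \cite{Fogarty77}, and numerical flatness forces exactness of $0\to(\psi_*\mc K)^{S_n}\to E_1\to L\to 0$, after which induction applies. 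You instead descend the whole subbundle in one stroke: the fixed base point plus Tannakian rigidity upgrade plain invariance to the strong statement $\lambda_\sigma(\sigma^*W_0)=W_0$ as subsheaves (two subobjects of $\psi^*E$ with the same fibre at $p$ coincide, because $T_p$ is an equivalence onto $\Rep_k(\pi^S(X^n,p))$ and subobjects correspond bijectively to submodules), and then the classifying morphism to $\mathrm{Gr}_r(E)\times_{S^n(X)}X^n$ factors through the quotient. This is valid, and it buys independence from both Waterhouse's structure theory and Fogarty's theorem; it also proves the formally stronger claim that $\psi^S_*$ itself is faithfully flat, although by Lemma \ref{group-lemma} every representation pulled back along $\psi^S_*$ factors through $\pi^S(X,x)_{\rm ab}$, so the two subobject conditions in fact coincide, and your Hopf-algebra reduction from $\psi^S_*$ to $\widetilde{\psi^S_*}$ is sound. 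Two points you should make explicit to close the argument: first, that subobjects in $\mathscr{C}^{\rm nf}(X^n)$ are genuine subbundles with numerically flat quotients (morphisms of numerically flat bundles have constant rank, so the abelian structure is the sheaf-theoretic one), which is what legitimizes the Grassmannian section; second, that the factorization of the $S_n$-invariant morphism $X^n\to\mathrm{Gr}_r(E)$ uses that $X^n\to S^n(X)$ is a categorical quotient in schemes (true for finite-group quotients of quasi-projective varieties in any characteristic), with the resulting map a section over $S^n(X)$ because $\psi$ is an epimorphism. With these remarks added, your proof is complete.
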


\begin{proof}
	We will apply \cite[Proposition 2.21 (a)]{DMOS}. 
	Let $E_1$ be an object in the category $\mathscr{C}^{\rm nf}(S^n(X))$. 
	Clearly $\psi^*E_1$ has the same rank as that of $E_1$. 
	If $\mc E_2 \subset \mc E_1 := \psi^*E_1$ is a subbundle corresponding to a representation of 
	$\pi^S(X, x)_{\rm ab}$, we need to show that there is a subbundle $E_2 \subset E_1$ such that 
	$\psi^*E_2 = \mc E_2$. 
	We will prove this by induction on rank of $E_1$. If ${\rm rank}(E_1) = 1$, there is 
	nothing to prove. Assume that ${\rm rank}(E_1) \geq 2$. 
	
	The vector bundles $\mc E_i$ corresponds to a representation 
	$$\pi^S(X^n, (x, \ldots, x)) \stackrel{f_0}{\longrightarrow} 
	\pi^S(X, x)_{\rm ab} \stackrel{\rho_i}{\longrightarrow} \GL(V_i), \ \ \forall \ i = 1, 2.$$ 
	It follows from Proposition \ref{S_n-equivariance-str} that 
	$\mc E_2$ is a $S_n$-invariant numerically flat vector bundle on $X^n$. 
	Since $\pi^S(X, x)_{\rm ab}$ is an abelian $k$-group scheme, 
	it follows from \cite[Theorem 9.4, p.~70]{Wa}, that we can find 
	a surjective $\pi^S(X, x)_{\rm ab}$-module homomorphism $V_1 \to L_1$, where $L_1$ is one 
	dimensional and $V_2$ is a $\pi^S(X, x)_{\rm ab}$-submodule of the kernel of this homomorphism. 
	Let $\mc L$ be the line bundle on $X^n$ corresponding to the representation $L_1$. 
	Then it is clear that $\mc L$ is $S_n$-invariant (see Proposition \ref{S_n-equivariance-str}) 
	and there is an $S_n$-equivariant exact sequence of vector bundles 
	$$0 \longrightarrow \mc K \longrightarrow \mc E_1 \longrightarrow \mc{L} \longrightarrow 0$$ 
	on $X^n$ such that $\mc E_2 \subset \mc{K}$. 
	
	Every $S_n$-invariant line bundle on $X^n$ 
	is the pullback of a line bundle from $S^n(X)$ (see \cite[Proposition 3.6]{Fogarty77}, 
	also \cite[Proposition 5.1.1]{PS19}). 
	Therefore, it follows that $L := (\psi_*\mc L)^{S_n}$ is a line bundle on all of $S^n(X)$. 
	We now show that $L$ is numerically flat on $S^n(X)$. 
	Given a morphism $C \longrightarrow S^n(X)$ from a smooth projective curve $C$ into $S^n(X)$, 
	we can find a curve $\widetilde{C}$ and a morphism $\widetilde{C} \longrightarrow C$ making the 
	following diagram commutative. 
	\[
	\xymatrix{
		\widetilde{C} \ar[d] \ar[rr] && X^n \ar[d]^\psi \\ 
		C \ar[rr] && S^n(X) 
	}
	\]
	Since $\mc L$ is numerically flat on $X^n$ and $\mc L \cong \psi^*L$, it follows that $L$ is numerically flat. 
	
	We claim that 
	\begin{equation}\label{eqn-3}
		0\to (\psi_*\mc K)^{S_n} \longrightarrow (\psi_*\mc E_1)^{S_n} \cong E_1 \stackrel{q}{\longrightarrow} 
		(\psi_*\mc L)^{S_n} = L \longrightarrow 0 
	\end{equation}
	is exact. The sequence \eqref{eqn-3} can fail to be exact only on the right. 
	Since both $E_1$ and $L$ are numerically flat and $L$ is a line bundle, $q$ is surjective
	since it is nonzero. 
	This proves the exactness of \eqref{eqn-3}. 
	It follows that $K := (\psi_*\mc K)^{S_n}$ is locally free and numerically flat on $S^n(X)$. 
	It is clear that $\psi^* K = \mc K$ on $X^n$. 
	Since $\mc E_2 \subset \mc K$ the assertion that there is $E_2 \subset E_1$ such that 
	$\mc E_2 = \psi^*E_2$ on $X^n$ follows by induction on rank. 
	
	To complete the proof of the Proposition, we need to show that if 
	$E_1$ and $E_2$ are numerically flat vector bundles on $S^n(X)$ then the natural map 
	$${\rm Hom}_{S^n(X)}(E_1, E_2) \stackrel{}{\longrightarrow} {\rm Hom}_{X^n}(\psi^* E_1, \psi^* E_2)$$  
	is bijective. It is clear that this natural map is injective (faithful). 
	Therefore, it suffices to show that 
	given any numerically flat vector bundle $E$ on $S^n(X)$, any nonzero homomorphism 
	$\phi : \mc O_{X^n} \longrightarrow \psi^*E$ comes from a nonzero homomorphism 
	$\widetilde{\phi} : \mc O_{S^n(X)} \longrightarrow E$. 
	Since the homomorphism $\pi^S(X^n, x) \longrightarrow \pi^S(X, x)_{\rm ab}$ is faithfully flat, 
	and $\psi^*E$ corresponds to a representation of $\pi^S(X, x)_{\rm ab}$, 
	it follows that $\phi$ is a morphism between two representations of $\pi^S(X, x)_{\rm ab}$. 
	This shows that $\phi$ is $S_n$-equivariant on $X^n$. 
	Now from the preceding discussion it follows that $\phi$ arises from a morphism 
	$\mc O_{S^n(X)} \longrightarrow E$. 
\end{proof}

\subsection{Proofs of Theorems}
Let $X$ be a connected smooth projective variety over $k$ and 
$f : \mathbb{P}(E) \longrightarrow X$ be a projective bundle over $X$. 
It is easy to see, using $\pi^S(\mathbb{P}^n,s)=\{1\}$ and \cite[Corollary 12.9, Chapter III]{Ha},
that for a numerically flat sheaf $F$ on $\mathbb{P}(E)$, the sheaf 
$f_*F$ is locally free and the natural map $f^*f_*F\to F$ is an 
isomorphism. From this it easily follows that 
the homomorphism of $S$-fundamental group schemes 
\begin{equation*}\label{f_*^S}
	f_*^S : \pi^S(\mathbb{P}(E), y) \longrightarrow \pi^S(X, f(y)) 
\end{equation*}
is an isomorphism, for all $y \in \mathbb{P}(E)$. 

\begin{theorem}\label{main-thm-S-fgs}
	The homomorphism of affine $k$-group schemes 
	\begin{equation}
	\widetilde{\psi_*^S} : \pi^S(X, x)_{\rm ab} \longrightarrow \pi^S(S^n(X), nx) \nonumber 
	\end{equation}
	is an isomorphism, for all $x \in X(k)$.  
\end{theorem}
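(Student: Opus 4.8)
The plan is to leverage the faithful flatness already proved in Proposition \ref{faithfully-flatness} and to produce a left inverse of $\widetilde{\psi_*^S}$ coming from the Albanese (Abel--Jacobi) morphism, invoking \cite[Section 7]{La2} for the curve case. Recall that a homomorphism of affine $k$-group schemes that is simultaneously faithfully flat and a closed immersion is an isomorphism; since Proposition \ref{faithfully-flatness} gives faithful flatness, it suffices to exhibit $\widetilde{\psi_*^S}$ as a closed immersion, for which producing a left inverse is enough. Note in passing that faithful flatness already forces $\pi^S(S^n(X), nx)$ to be abelian, being a quotient of the abelian group scheme $\pi^S(X, x)_{\rm ab}$.

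To build the left inverse I would first introduce the relevant maps to the Jacobian. Set $A := \Jac(X)$ and let $j : X \to A$ be the Abel--Jacobi map normalized so that $j(x) = 0$. The symmetric sum map $\Sigma : X^n \to A$, given by $\Sigma = m_A \circ (j \times \cdots \times j)$ with $m_A : A^n \to A$ the group addition, is $S_n$-invariant, hence descends to the Albanese morphism $a : S^n(X) \to A$ satisfying $a \circ \psi = \Sigma$ and $a(nx) = 0$; this exists for every $n \geq 1$. Applying $\pi^S(-)$ and using the product decomposition $\pi^S(X^n,(x,\ldots,x)) \cong \pi^S(X,x)^n$ of \cite[Theorem 4.1]{La2} together with the fact that $\pi^S(A,0)$ is abelian with group law induced by $m_A$ (see \cite[Section 7]{La2}), a direct computation yields $\Sigma_*^S = \overline{\jmath}_*^S \circ f_0$, where $\overline{\jmath}_*^S : \pi^S(X,x)_{\rm ab} \to \pi^S(A,0)$ is induced by $j$ and $f_0$ is the homomorphism of Lemma \ref{group-lemma}. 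On the other hand $a_*^S \circ \psi_*^S = \Sigma_*^S$ and $\psi_*^S = \widetilde{\psi_*^S} \circ f_0$. Since $\Sigma_*^S$ is $S_n$-invariant, the uniqueness part of Lemma \ref{group-lemma} applied to the two factorizations $\overline{\jmath}_*^S \circ f_0$ and $(a_*^S \circ \widetilde{\psi_*^S}) \circ f_0$ gives the compatibility identity
$$a_*^S \circ \widetilde{\psi_*^S} = \overline{\jmath}_*^S.$$

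The decisive input from \cite[Section 7]{La2} is that the Abel--Jacobi map induces an isomorphism $\overline{\jmath}_*^S : \pi^S(X,x)_{\rm ab} \xrightarrow{\sim} \pi^S(\Jac(X), 0)$ between the abelianized $S$-fundamental group scheme of the curve and the $S$-fundamental group scheme of its Jacobian. (When $n > 2g - 2$ this identification is transparent from the projective bundle $a : S^n(X) \to \Jac(X)$ and the isomorphism $a_*^S$ recorded just before the theorem, but the present argument needs no restriction on $n$.) Granting this, the identity above shows that $(\overline{\jmath}_*^S)^{-1} \circ a_*^S$ is a left inverse of $\widetilde{\psi_*^S}$, so $\widetilde{\psi_*^S}$ is a closed immersion. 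Combining this with Proposition \ref{faithfully-flatness} proves that $\widetilde{\psi_*^S}$ is an isomorphism.

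The only genuinely substantial ingredients are the faithful flatness of Proposition \ref{faithfully-flatness} (already established) and the isomorphism $\overline{\jmath}_*^S$ imported from \cite[Section 7]{La2}; the remainder of the argument is formal. I therefore expect the main point requiring care to be the compatibility identity $a_*^S \circ \widetilde{\psi_*^S} = \overline{\jmath}_*^S$: one must check that the descent of $\Sigma$ through $\psi$ matches the factorization of $\psi_*^S$ through $f_0$, and that the group law on $\pi^S(\Jac(X),0)$ is indeed the one induced by $m_A$, so that $\Sigma_*^S$ equals $\overline{\jmath}_*^S \circ f_0$. The base-point bookkeeping, namely $\psi(x,\ldots,x) = nx$, $j(x) = 0$ and $a(nx) = 0$, together with this identification of the group structure on $\pi^S(A,0)$, are the only places demanding attention.
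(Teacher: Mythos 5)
Your proposal is correct and follows essentially the same route as the paper: faithful flatness from Proposition \ref{faithfully-flatness} combined with a left inverse of $\widetilde{\psi_*^S}$ built from the Albanese morphism of $S^n(X)$ and the isomorphism $\pi^S(X,x)_{\rm ab} \xrightarrow{\ \sim\ } \pi^S(\Jac(X),0)$ imported from \cite[Section 7]{La2}; indeed your map $a$ is exactly the paper's composite $\eta \circ c$ in the case $n < 2g-1$. The only differences are minor streamlinings, both valid: you argue uniformly in $n$ where the paper splits into the cases $n \geq 2g-1$ (handled directly via the projective-bundle isomorphism $\eta_*$) and $n < 2g-1$, and you verify the compatibility $a_*^S \circ \widetilde{\psi_*^S} = \overline{\jmath}_*^S$ by the uniqueness of the factorization in Lemma \ref{group-lemma} (legitimate, since $f_0$ is an epimorphism) rather than by precomposing with the section $t \mapsto (t,x,\ldots,x)$ and using faithful flatness of $a_0$ as the paper does.
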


\begin{proof}
Let ${\rm Alb}(X)$ be the Albanese variety of $X$. Let $g$ be the genus of the curve $X$. 
Fix a closed point $x \in X(k)$. 
If $n \geq 2g-1$, then the morphism $\eta : S^n(X) \to {\rm Alb}(X)$ given by 
$$\sum_{i=1}^n x_i \mapsto \sum_{i=1}^n x_i - nx,$$ 
makes $S^n(X)$ into a projective bundle over ${\rm Alb}(X)$. 
It follows that the induced homomorphism 
of affine $k$-group schemes is an 
isomorphism,
$$\eta_* : \pi^S(S^n(X), nx) \stackrel{\sim}{\longrightarrow} \pi^S({\rm Alb}(X), 0)\,.$$  
From \cite[Section 7]{La2} it follows that the Albanese morphism 
${\rm alb}_X : X \longrightarrow {\rm Alb}(X)$ given by $t \mapsto t-x$ induces maps 
$${\rm alb}_{X*} : \pi^S(X, x)\stackrel{a_0}{\longrightarrow}\pi^S(X, x)_{\rm ab} 
\stackrel{\widetilde{{\rm alb}_{X*}}}{\longrightarrow} \pi^S({\rm Alb}(X), 0),$$
where $\widetilde{{\rm alb}_{X*}}$ is an isomorphism.
Consider the commutative diagram
\[
 \xymatrix{
	X\ar[r]^a\ar[dr]_{\psi\circ a} & X^n\ar[d]^{\psi} & \\
	& S^n(X)\ar[r]^\eta & {\rm Alb}(X)\,,
 }
\]
where $a(t) = (t, x, x, \ldots, x)$.
At the level of $S$-fundamental group schemes we get the commutative diagram
\[
 \xymatrix{
	\pi^S(X,x)\ar[r]^{a_*}\ar[dr]_{a_0} & \pi^S(X,x)^n\ar[d]_{\psi_0}\ar[r] &\pi^S(X,x)^n_{\rm ab}\ar[dl]_m &\\
	&\pi^S(X,x)_{\rm ab}\ar[r]^{\widetilde{\psi^S_*}}& \pi^S(S^n(X), nx) \ar[r]^{\eta_*} & \pi^S({\rm Alb}(X), 0)\,,
 }
\]
We have $\eta\circ \psi\circ a={\rm alb}_X$. It is easy to check that 
$(\psi\circ a)_*=\widetilde{\psi^S_*}\circ a_0$. Since $a_0$ is faithfully flat
and 
$$\widetilde{\psi^S_*}\circ a_0=\eta_*^{-1}\circ \widetilde{{\rm alb}_{X*}}\circ a_0\,,$$
it follows that $\widetilde{\psi^S_*}=\eta_*^{-1}\circ \widetilde{{\rm alb}_{X*}}$
and so the theorem is true is $n\geq 2g-1$.

Assume that $n < 2g-1$.
Consider the maps 
$$X \stackrel{a}{\longrightarrow} X^n \stackrel{\psi}{\longrightarrow} S^n(X) 
\stackrel{c}{\longrightarrow}S^{2g-1}(X) \stackrel{\eta}{\longrightarrow} {\rm Alb}(X),$$ 
where $a(t) = (t, x, x, \ldots, x)$ and $c(\sum_{i=1}^n x_i) = \sum_{i=1}^n x_i + (2g-1-n)x$. 
It is clear that the composite morphism is ${\rm alb}_X$. 
As above, one easily checks that 
$$\eta_*\circ c_*\circ \widetilde{\psi^S_*}=\widetilde{{\rm alb}_{X*}}\,.$$
Thus, we get homomorphisms of affine $k$-group schemes 
$$\pi^S(X, x)_{\rm ab} \stackrel{\widetilde{\psi^S_*}}{\longrightarrow} \pi^S(S^n(X), nx) 
		\stackrel{\eta_*\circ c_*}{\longrightarrow} \pi^S({\rm Alb}(X), 0),$$ 
such that the composite homomorphism is an isomorphism. This forces that the first 
map is a closed immersion. Since we know from Proposition \ref{faithfully-flatness} 
that the first map is faithfully flat, the theorem follows.
\end{proof}

\begin{remark}
That $\widetilde{\psi_*^S}$ is a closed immersion could have been 
proved using the same method in \cite{PS19} under the assumption that ${\rm char}(k)>3$.
\end{remark}

Let $X$ be a reduced proper $k$-scheme with $H^0(X, \mc O_X) = k$. 
Let $E$ be an essentially finite vector bundle on $X$. 
Then there is a finite $k$-group scheme $G$, a principal $G$-bundle $p : P \to X$ 
and a finite dimensional $k$-linear representation $\rho : G \to \GL(W)$ such that 
$E \cong P \times^{\rho} W$, the vector bundle associated to the representation $\rho$. 
It follows from the proof of \cite[Proposition 3.8]{No1} that there 
is a finite vector bundle $\mc V$ on $X$ such that $E$ is a subbundle of $\mc V$.

As before, let $X$ be a connected smooth projective curve over $k$ and $S^n(X)$ 
the $n$-fold symmetric product of $X$. 
It is clear that $\psi^*$
takes a finite vector bundle to a finite vector bundle. 
Thus, $\psi^*E \subset \psi^*\mc V$, which shows that $\mathscr F$ 
takes essentially finite vector bundles to essentially finite vector bundles.  
Note that there is a commutative diagram of homomorphisms of affine $k$-group schemes 
$$
\xymatrix{
	\pi^S(X,x)_{\rm ab}\ar[r]^-{\simeq} \ar[d] & \pi^S(S^n(X),nx) \ar[d]\\
	\pi^N(X,x)_{\rm ab}\ar[r]^-{\widetilde{\psi_*^N}} & \pi^N(S^n(X), nx) 
}
$$
where the vertical arrows are faithfully flat by \cite[Lemma 6.2]{La}. 
It follows that $\widetilde{\psi_*^N}$ is faithfully flat. 

Now let $\mc E$ be an essentially finite $S_n$-invariant vector bundle on $X^n$. 
It is easy to find a finite and $S_n$-invariant bundle $\mc V$ on $X^n$ and an 
$S_n$-equivariant inclusion $\mc E \subset \mc V$. 
Define $E = (\psi_*\mc E)^{S_n}$ and $V := (\psi_*\mc V)^{S_n}$. 
It is clear that $V$ is a finite vector bundle 
and $E \subset V$. So $E$ is essentially finite and $\mathscr F(E) = \mc E$. 
This shows that $\widetilde{f}^N$ is a closed immersion. 
Thus, we have the following. 

\begin{theorem}\label{main-thm-N-fgs}
	There is a natural isomorphism of affine $k$-group schemes 
	$$\widetilde{\psi_*^N} : \pi^N(X, x)_{\rm ab} \longrightarrow \pi^N(S^n(X), nx).$$
\end{theorem}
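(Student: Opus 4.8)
The plan is to prove that $\widetilde{\psi_*^N}$ is an isomorphism by showing it is both faithfully flat and a closed immersion, exactly mirroring the strategy used for $\widetilde{\psi_*^S}$ in Theorem \ref{main-thm-S-fgs}, and then invoking the standard fact that a homomorphism of affine $k$-group schemes that is both faithfully flat and a closed immersion is an isomorphism. The faithful flatness will come essentially for free from the $S$-case: I would set up the commutative square relating the two fundamental group schemes, whose vertical arrows $\pi^S \to \pi^N$ are faithfully flat by \cite[Lemma 6.2]{La}, and whose top horizontal arrow is the isomorphism $\widetilde{\psi_*^S}$ just established. Since the composite $\pi^S(X,x)_{\rm ab} \to \pi^N(S^n(X),nx)$ through the top-right corner is faithfully flat (being a composite of faithfully flat maps), and it factors through $\widetilde{\psi_*^N}$, it follows immediately that $\widetilde{\psi_*^N}$ is faithfully flat.

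For the closed immersion, the plan is to apply Proposition \ref{DM}(b): I must show that every object of $\Rep_k(\pi^N(S^n(X),nx))$, i.e.\ every essentially finite vector bundle on $S^n(X)$, arises as a subquotient of some $\mathscr F(E)$ with $E$ essentially finite on $S^n(X)$ viewed through the pullback functor. Concretely, I would start from an essentially finite $S_n$-invariant bundle $\mc E$ on $X^n$, use the quoted consequence of \cite[Proposition 3.8]{No1} to embed it $S_n$-equivariantly into a finite and $S_n$-invariant bundle $\mc V$, then descend: set $E := (\psi_* \mc E)^{S_n}$ and $V := (\psi_* \mc V)^{S_n}$. The key points are that $V$ is finite on $S^n(X)$ and that $E \subset V$, so $E$ is essentially finite with $\mathscr F(E) = \psi^* E = \mc E$. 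This exhibits $\mc E$ as being of the required form and yields the closed immersion criterion.

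The main obstacle, and the step requiring the most care, is the \emph{equivariant} embedding $\mc E \hookrightarrow \mc V$ into a \emph{finite and $S_n$-invariant} bundle $\mc V$, together with the descent step producing a genuine finite bundle $V$ on $S^n(X)$. One must ensure both that the finite bundle $\mc V$ surrounding $\mc E$ can be chosen $S_n$-invariant (this uses that $\mc E$ itself is $S_n$-invariant, so the construction of \cite{No1} can be symmetrized, e.g.\ by averaging or by replacing $\mc V$ with $\bigoplus_{\sigma \in S_n} \sigma^* \mc V$) and that taking $S_n$-invariants of the pushforward commutes appropriately so that $V = (\psi_*\mc V)^{S_n}$ is again \emph{finite} on $S^n(X)$ rather than merely essentially finite; the latter hinges on the fact that $\psi$ is faithfully flat and finite and that $\psi^* V \cong \mc V$, so that the defining polynomial relation $f(\mc V) \cong g(\mc V)$ descends to $f(V) \cong g(V)$ on $S^n(X)$.

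With faithful flatness and the closed immersion in hand, the conclusion is immediate, so the whole argument is genuinely parallel to the $S$-fundamental group case, with the additional input being only the careful handling of finiteness under descent; I would present these two halves in that order and then state that $\widetilde{\psi_*^N}$ is therefore an isomorphism.
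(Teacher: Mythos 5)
Your proposal reproduces the paper's proof essentially verbatim: faithful flatness is deduced from the commutative square whose top row is the isomorphism $\widetilde{\psi_*^S}$ of Theorem \ref{main-thm-S-fgs} and whose vertical arrows $\pi^S \to \pi^N$ are faithfully flat by \cite[Lemma 6.2]{La}, and the closed immersion is obtained exactly as in the paper by embedding an $S_n$-invariant essentially finite $\mc E$ on $X^n$ equivariantly into a finite $S_n$-invariant $\mc V$ (the paper leaves this as ``easy''; your $\bigoplus_{\sigma \in S_n}\sigma^*\mc V$ device is the natural way to do it) and descending via $E = (\psi_*\mc E)^{S_n} \subset V = (\psi_*\mc V)^{S_n}$ with $\mathscr F(E) = \mc E$. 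One small slip worth correcting: in quoting Proposition \ref{DM}(b) you say every object of $\Rep_k(\pi^N(S^n(X), nx))$ must arise as a subquotient of some $\mathscr F(E)$, whereas the criterion for $\widetilde{\psi_*^N}$ to be a closed immersion concerns objects of the \emph{source} category $\Rep_k(\pi^N(X,x)_{\rm ab})$ --- i.e.\ the $S_n$-invariant essentially finite bundles on $X^n$ coming from the abelianization --- which is in fact what your concrete argument (and the paper's) correctly handles.
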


\subsection{\'Etale Fundamental Group Scheme of $S^n(X)$}\label{subsection-etale} 
In this subsection we sketch how to deduce from Theorem \ref{main-thm-N-fgs} 
the same assertion for \'etale fundamental group schemes. 
This result is a special case of \cite[Theorem 1.2]{BH}. 
Note that there is a commutative diagram
\[
\xymatrix{
	\pi^N(X, x) \ar@{->>}[r]\ar@{->>}[d] & \pi^N(X, x)_{\rm ab} \ar[r]^-{\sim} \ar@{->>}[d] & 
	\pi^N(S^n(X), nx) \ar@{->>}[d]^{d}\\ 
	\pi^{\et}(X, x) \ar@{->>}[r] & \pi^{\et}(X, x)_{\rm ab} \ar[r] & \pi^{\et}(S^n(X), nx)\,.
}
\]
From this it follows that $\pi^{\et}(X,x)_{\rm ab} \longrightarrow \pi^{\et}(S^n(X), nx)$ 
is faithfully flat. Consider a homomorphism $\pi^{\et}(X, x)_{\rm ab} \to \GL(V)$. It follows 
using \cite[Proposition 3.10]{No1} that this homomorphism factors through a finite and reduced 
$k$-group scheme $G$. Now consider the diagram 
\[
\xymatrix{
	\pi^N(X, x)_{\rm ab} \ar[r]^-{\sim} \ar@{->>}[d] & \pi^N(S^n(X), nx) \ar@{->>}[r]^{d} 
	\ar@{-->}[d] & \pi^{\et}(S^n(X), nx) \ar@{-->}[dl]\\
	\pi^{\et}(X, x)_{\rm ab} \ar[r] & G \ar[r] & \GL(V)\,.
}
\]
The right vertical arrow is the unique map which makes the square commute. It factors through 
$d$ since $G$ is finite and reduced. Now it follows from \cite[Proposition 2.21 (b)]{DMOS} 
that $\pi^{\et}(X,x)_{\rm ab} \longrightarrow \pi^{\et}(S^n(X), nx)$ is a closed immersion. 
This proves the following. 
\begin{theorem}\label{main-thm-etale-fgs}
	For any closed point $x \in X(k)$, there is an isomorphism of affine $k$-group schemes 
	$$\widetilde{\psi_*^{\et}} : \pi^{\et}(X, x)_{\rm ab} \longrightarrow \pi^{\et}(S^n(X), nx).$$ 
\end{theorem}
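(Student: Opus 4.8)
The plan is to bootstrap the étale statement from the isomorphism of Nori fundamental group schemes already obtained in Theorem \ref{main-thm-N-fgs}, exploiting two structural facts: that $\pi^{\et}$ is the maximal pro-étale (equivalently, pro-reduced) quotient of $\pi^N$, via the canonical faithfully flat surjection of \cite[Lemma 6.2]{La}, and that abelianization carries faithfully flat homomorphisms to faithfully flat ones. As with $\pi^S$ and $\pi^N$, I would prove the two halves separately --- first that $\widetilde{\psi_*^{\et}}$ is faithfully flat, then that it is a closed immersion --- and conclude using that a homomorphism of affine $k$-group schemes which is simultaneously faithfully flat and a closed immersion is an isomorphism.

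First I would assemble the commutative square
\[
\xymatrix{
 \pi^N(X,x)_{\rm ab} \ar[r]^-{\beta} \ar@{->>}[d]_{e} & \pi^N(S^n(X),nx) \ar@{->>}[d]^{d} \\
 \pi^{\et}(X,x)_{\rm ab} \ar[r]_-{\widetilde{\psi_*^{\et}}} & \pi^{\et}(S^n(X),nx),
}
\]
whose top arrow $\beta$ is the isomorphism of Theorem \ref{main-thm-N-fgs} and whose vertical arrows $e$ and $d$ are the abelianizations of the faithfully flat surjections $\pi^N \to \pi^{\et}$. Both $e$ and $d$ are faithfully flat, since the composite $\pi^N \to \pi^{\et} \to \pi^{\et}_{\rm ab}$ is surjective and annihilates the commutator subgroup, hence descends to a surjection of abelianizations. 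Reading the square as $d \circ \beta = \widetilde{\psi_*^{\et}} \circ e$, the left-hand side is faithfully flat (an isomorphism followed by the surjection $d$), so $\widetilde{\psi_*^{\et}} \circ e$ is faithfully flat; since surjectivity of a composite forces surjectivity of the outer factor, $\widetilde{\psi_*^{\et}}$ is faithfully flat.

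For the closed immersion I would apply Proposition \ref{DM}(b). Starting from an arbitrary representation $\rho : \pi^{\et}(X,x)_{\rm ab} \to \GL(V)$, by \cite[Proposition 3.10]{No1} it factors as $\rho = (G \to \GL(V)) \circ \pi$ through a finite reduced, hence étale, $k$-group scheme $G$, with $\pi : \pi^{\et}(X,x)_{\rm ab} \to G$. Precomposing $\pi$ with $e$ and transporting across $\beta$ produces a homomorphism $\gamma := \pi \circ e \circ \beta^{-1} : \pi^N(S^n(X),nx) \to G$. Because $G$ is finite and reduced, $\gamma$ factors through the maximal étale quotient $d$, giving $\bar\gamma : \pi^{\et}(S^n(X),nx) \to G$ and thus a representation $V'$ of $\pi^{\et}(S^n(X),nx)$. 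A short chase, using the commutativity of the square and the surjectivity of $e$ to cancel it on the right, shows $\bar\gamma \circ \widetilde{\psi_*^{\et}} = \pi$, so the restriction of $V'$ along $\widetilde{\psi_*^{\et}}$ is exactly $\rho$. Hence every object of $\Rep_k(\pi^{\et}(X,x)_{\rm ab})$ is the restriction (\emph{a fortiori} a subquotient) of an object pulled back from $\pi^{\et}(S^n(X),nx)$, and Proposition \ref{DM}(b) yields that $\widetilde{\psi_*^{\et}}$ is a closed immersion; with faithful flatness this finishes the argument.

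The step I expect to be the main obstacle is the factorization of $\gamma$ through the reduced quotient $d$, together with the verification that the resulting $V'$ restricts back to the \emph{given} representation $\rho$ and not merely to some representation containing $\rho$ as a proper subquotient. This rests squarely on the identification of $\pi^{\et}$ as the pro-reduced quotient of $\pi^N$ and on keeping the commuting squares mutually compatible; once that bookkeeping is arranged, every remaining step is formal.
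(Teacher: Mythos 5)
Your proposal is correct and takes essentially the same route as the paper: faithful flatness is read off from the commutative square comparing the Nori isomorphism of Theorem \ref{main-thm-N-fgs} with the abelianized surjections $\pi^N \twoheadrightarrow \pi^{\et}$, and the closed immersion is obtained by factoring a representation of $\pi^{\et}(X,x)_{\rm ab}$ through a finite reduced group scheme via \cite[Proposition 3.10]{No1}, transporting it across the Nori isomorphism, factoring through the maximal \'etale quotient $d$, and invoking Proposition \ref{DM}(b). The compatibility you flag as the main obstacle, namely $\bar\gamma \circ \widetilde{\psi_*^{\et}} = \pi$, is exactly the paper's dashed-arrow commutation and holds as you say because the faithfully flat map $e$ is an epimorphism of affine group schemes.
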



\end{document}